\documentclass[12pt]{amsart}



\usepackage{amssymb}

\usepackage{enumitem}
\usepackage{mathrsfs}
\usepackage{srcltx}
\usepackage[bookmarks]{hyperref}

\makeatletter
\@namedef{subjclassname@2020}{%
  \textup{2020} Mathematics Subject Classification}
\makeatother

\usepackage[T1]{fontenc}


\newtheorem{thm}{Theorem}[section]
\newtheorem{cor}[thm]{Corollary}



\theoremstyle{definition}
\newtheorem{dfn}[thm]{Definition}



\numberwithin{equation}{section}


\frenchspacing

\textwidth=13.5cm
\textheight=23cm
\parindent=16pt
\oddsidemargin=-0.5cm
\evensidemargin=-0.5cm
\topmargin=-0.5cm


\newcommand{\R}{\mathbb{R}}
\newcommand{\Cc}{\mathscr{C}}
\newcommand{\D}{\mathcal{D}}
\newcommand{\G}{\mathbb{G}}
\newcommand{\spl}{\mathcal{D}^+}
\newcommand{\om}{\infty}
\newcommand{\ep}{\varepsilon}
\newcommand{\et}{\quad \mbox{and} \quad }

\def\adh#1{\overline{#1}}


\begin{document}


\baselineskip=17pt


\title[]{A remark on $\Cc^\infty$ definable equivalence}

\author[A. Valette]{Anna Valette}
\address{Chair of Optimization and Control\\ Jagiellonian University\\
{\L}oja\-sie\-wi\-cza 6\\
30-348 Krak\'ow, Poland}
\email{anna.valette@im.uj.edu.pl}

\author[G. Valette]{Guillaume Valette}
\address{Institute of Mathematics\\ Jagiellonian University\\
{\L}ojasiewicza 6\\
30-348 Krak\'ow, Poland}
\email{guillaume.valette@im.uj.edu.pl}

\date{}

\begin{abstract}
We establish that if a submanifold $M$ of $\R^n$ is definable in some o-minimal structure then 
any definable submanifold $N\subset \R^n$ which is $\Cc^\infty$ diffeomorphic to $M$, with a diffeomorphism $h:N\to M$ that is sufficiently close to the identity,  must be $\Cc^\infty$ definably diffeomorphic to $M$.  The definable diffeomorphism  between $N$ and $M$ is then provided by a tubular neighborhood of $M$.
\end{abstract}

\subjclass[2020]{Primary 32B20, 58C25; Secondary  03C64}

\keywords{o-minimal structures, definable diffeomorphism}

\maketitle

\section{Introduction}

The framework of o-minimal structures is well adapted to perform analysis as well as
 geometry, and definable mappings have many finiteness properties that are valuable for applications. It  is however not always easy to construct  definable diffeomorphisms. For instance, smooth trivializations are often generated by integration of a vector field \cite{ehresmann} and the flow of a definable vector field may fail to be definable in the same structure.   
Furthermore, M. Shiota gave examples of algebraic smooth manifolds that are $\Cc^\infty$ diffeomorphic
but not Nash diffeomorphic (i.e. not semialgebraically $\Cc^\infty$ equivalent) \cite{shiota}. This points out that   $\Cc^\infty$ equivalence of definable  manifolds does not yield definable $\Cc^\infty$ equivalence. In this note,
we show that such pathologies cannot arise if the  diffeomorphisms considered are sufficiently close to the identity. Namely,
 we establish that if a submanifold $M$ of $\R^n$ is definable in some o-minimal structure then 
any definable submanifold $N\subset \R^n$ which is $\Cc^\infty$ diffeomorphic to $M$, with a diffeomorphism $h:N\to M$ that is sufficiently close to the identity,  must be definably $\Cc^\infty$  diffeomorphic to $M$ (Theorem \ref{iso}).  The definable diffeomorphism  between $N$ and $M$ is then provided by a tubular neighborhood of $M$.

We briefly recall that an \textbf{o-minimal structure} expanding the real  field $(\R, +,\cdot)$ is the data for every $n$ of  a Boolean algebra 
 $\mathcal{D}_n$  of subsets of $\R^n$   containing  all the algebraic subsets of $\R^n$ and satisfying
the following axioms:
\begin{enumerate}
\item If $A\in\mathcal{D}_m$, $B\in\mathcal{D}_n$, then $A\times B\in\mathcal{D}_{m+n}$; \item
If $\pi:\R^n\times \R\to \R^n$ is the natural projection and $A\in\mathcal{D}_{n+1}$, then $\pi(A)\in\mathcal{D}_n$;
 \item $\mathcal{D}_1$ is nothing else but all the finite unions of points and intervals.
\end{enumerate}
A set belonging to the structure $\mathcal{D}$ is called a {\it definable set} and a map whose graph is in the structure $\mathcal{D}$ is called  a {\it definable map}. Given $B\subset \R^k$, we say that $(Z_t)_{t\in B}$ is  a {\it definable family} of subsets of $\R^n$ if for each $t\in B$, $Z_t\subset \R^n$  and  $\bigcup_{t\in B}\{t\} \times Z_t\in  \mathcal{D}_{k+n}$ (in particular, $Z_t$ is  definable  for all $t$). A family of mappings $(\varphi_t)_{t\in B}$ is said to be definable if the family of the respective graphs of the mappings $\varphi_t$ is a definable family of sets.

 Given  a definable set $A\subset\R^n$, we denote by  $\spl(A)$ the set of positive definable continuous functions on $A$.
 
 Given a mapping $f:A\to B$, with $A\subset \R^n$, $B\subset \R^k$, we denote by $|f|$ the function which assigns to $x\in A$ the number $|f(x)|$. Here we stress the fact that since $|f|$ is not a real number but a function, this does not define a norm on the space of mappings.

If $f$ is a differentiable mapping, we  denote by $d_xf$ its derivative at $x$ and we will write $|d_xf|$ for the norm of $d_xf$ (as a linear mapping) derived from the euclidean norm $|.|$.   We will write $|df|$ for the function defined by $x\mapsto |d_xf|$. We then set $$|f|_1:=|f|+|df|.$$  

 We will denote by $B(x,r)$ the open euclidean ball of radius $r$ centered at $x$, and by $\adh{A}$ the closure of $A$ in the euclidean topology.
 
 \section{Definable diffeomorphisms via retractions}
 Let us recall that 
 given a definable $\Cc^p$ submanifold $M$ of $\R^n$, $p\ge 2$, there is a definable  neighborhood $U$ of $M$ in $\R^n$ and a definable retraction $r:U\to M$ such that for all $x\in U$, $r(x)$ is the point that realizes the distance from $x$ to $M$. The vector $(x-r(x))$ is then orthogonal to the tangent space to $M$ at $r(x)$ and we say that $(U,r)$ is {\it a tubular neighborhood of $M$}. The mapping $r$ is at least $\Cc^{p-1}$ and, if $M$ is 
 $\Cc^\om$, then so is $r$ \cite{polyraby}, \cite[Proposition 2.4.1]{livre}, see also \cite[Theorem 6.11]{coste_bleu}.
\begin{thm}\label{iso}
Let $M\subset\R^n$ be a closed definable $\Cc^2$ submanifold and let $(U,r)$ be a definable tubular neighborhood of $M$.
There  exists $\ep\in\D^+(\R^n)$ such that if $N\subset\R^n$  is any definable $\Cc^2$ submanifold for which there exists a $\Cc^1$ diffeomorphism $h:N\to M$  satisfying $|h-id_{|_N}|_1<\ep$ then $N$ is contained in $U$ and the restriction $r_{|_N}:N\to M$ is a $\Cc^1$ diffeomorphism.
\end{thm}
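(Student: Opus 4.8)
The plan is to show that the nearest-point retraction $r$, which is definable and $\Cc^1$, restricts to the desired diffeomorphism once $\ep$ is chosen small enough; since $N$ and $r$ are definable, $r_{|_N}$ is automatically definable, so the whole content is to force $r_{|_N}\colon N\to M$ to be a $\Cc^1$ diffeomorphism. I would choose $\ep\in\spl(\R^n)$ small enough to guarantee two kinds of control. The $\Cc^0$ control: since $h(x)\in M$, one has $\mathrm{dist}(x,M)\le|h(x)-x|<\ep(x)$ for every $x\in N$, so by taking $\ep(x)\le\mathrm{dist}(x,M)$ off $U$ — while leaving $\ep$ large inside $U$ and interpolating continuously across $\partial U$ — the closeness forces $N\subset U$ and keeps every segment $[x,h(x)]$ inside $U$. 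The $\Cc^1$ control: I keep $\ep$ below the (positive definable) moduli of continuity of the Gauss map $m\mapsto T_mM$ of $M$ and of $x\mapsto d_xr$ on the tube, so that tangent spaces and the derivative of $r$ vary little at the relevant scale. Producing a single positive definable continuous $\ep$ meeting all these requirements is where o-minimality does the work: each is a bound $\ep<$(positive definable function), possibly decaying at infinity, and a finite minimum of such functions is again in $\spl(\R^n)$.

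Next I would prove that $r_{|_N}$ is a local diffeomorphism. The hypothesis $|d_xh-\iota_x|<\ep(x)$, where $\iota_x$ is the inclusion $T_xN\hookrightarrow\R^n$, means that every unit $v\in T_xN$ lies within $\ep(x)$ of $T_{h(x)}M$, so $T_xN$ is close to $T_{h(x)}M$; since $|h(x)-r(x)|\le 2\ep(x)$, the control on the Gauss map makes $T_xN$ close to $T_{r(x)}M$ as well. As $d_mr$ is the orthogonal projection onto $T_mM$ for $m\in M$ and $d_xr$ is continuous, $d_xr$ is close to the orthogonal projection onto $T_{r(x)}M$ for $x$ in the thin tube; restricted to the nearby subspace $T_xN$ this projection is a near-isometry, hence an isomorphism onto $T_{r(x)}M$. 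Since $\dim N=\dim M$, this makes $r_{|_N}$ an immersion, i.e. a local diffeomorphism.

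The heart of the argument is to pass from this infinitesimal statement to a global diffeomorphism without assuming $M$ compact, and this is the step I expect to be the main obstacle. I would first show that $g:=r_{|_N}$ is proper by comparing it with $h$: for compact $K\subset M$ the bound $|g(x)-h(x)|\le 2\ep(x)$, together with boundedness of $\ep$ over the (bounded) tube above $K$, gives $g^{-1}(K)\subset h^{-1}(K')$ for a suitable compact $K'\subset M$, and $g^{-1}(K)$ is closed in the compact set $h^{-1}(K')$; here it is essential that $h$, being a diffeomorphism, is proper. A proper local diffeomorphism is a covering map onto its image, and the image is open and closed in $M$. Finally I would run the straight-line homotopy $H_s(x)=r\bigl((1-s)x+s\,h(x)\bigr)$, which is well defined because the segment $[x,h(x)]$ stays in $U$, and satisfies $H_0=g$ and $H_1=h$ (as $r$ fixes $M$); thus $g$ is homotopic to the homeomorphism $h$. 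Working one connected component $N_0$ of $N$ at a time, the homotopy forces $g(N_0)$ and $h(N_0)$ to lie in the same component $C$ of $M$, while clopenness of $g(N_0)$ identifies it with $C$; and $(g_{|_{N_0}})_*$ is conjugate to the isomorphism $(h_{|_{N_0}})_*$ on $\pi_1$, so the covering $g_{|_{N_0}}\colon N_0\to C$ is one-sheeted. Hence $g$ is a bijective local diffeomorphism, i.e. a $\Cc^1$ diffeomorphism, and being definable it is the required definable diffeomorphism furnished by the tubular neighborhood.
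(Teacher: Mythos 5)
Your proposal is correct, and its global half takes a genuinely different route from the paper's. The local half is essentially the paper's argument: you force $N\subset U$ by a smallness requirement on $\ep$, make $r_{|_N}$ an immersion (hence a local diffeomorphism) by comparing $T_xN$ with nearby tangent spaces of $M$ --- the paper does this by a three-term estimate using only that $r$ restricts to the identity on $M$, you via the equally valid fact that $d_mr$ is the orthogonal projection onto $T_mM$ --- and you prove properness of $r_{|_N}$; your properness argument ($g^{-1}(K)\subset h^{-1}(K')$ with $h$ proper because it is a homeomorphism) is in fact a little cleaner than the paper's, which first shows $N$ is closed and then argues on sequences tending to infinity. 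The real divergence is how bijectivity is obtained. The paper invokes Ehresmann's theorem to make $r_{|_N}$ a locally trivial fibration over each of the finitely many components of $M$, and then rules out multiple points in a fiber by a direct metric contradiction (the $q/8$--$q/16$ estimates over a simply connected trivializing neighborhood), applied to one chosen point per component. You instead view the proper local diffeomorphism $r_{|_N}$ as a covering onto its clopen image and count sheets homotopically: the straight-line homotopy $H_s(x)=r((1-s)x+s\,h(x))$ deforms $r_{|_N}$ into $h$, so on each component the image of $\pi_1$ under $(r_{|_N})_*$ is (a conjugate of) the full fundamental group of the target component, whence one sheet; and since $h$ matches components of $N$ and $M$ bijectively while the homotopy forces $r_{|_N}$ to induce the same matching, global bijectivity follows (this last step is worth making explicit, since a priori two components of $N$ could cover the same component of $M$). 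What your route buys is a conceptual reason for the degree being one and no tuning of $\ep$ near chosen basepoints; what it costs is covering-space machinery plus one requirement the paper never needs, namely that each segment $[x,h(x)]$ lie in $U$. That requirement is the one place you should add detail, because $\ep$ must be fixed before $h$ is known: every point of $[x,h(x)]$ lies in $B(h(x),\ep(x))$, so it suffices that $\ep(x)<\rho(h(x))$ for some $\rho\in\spl(M)$ with $B(z,\rho(z))\subset U$ for all $z\in M$, and this becomes a condition on $\ep$ alone via the $1$-Lipschitz definable minorant $\tilde\rho(x):=\inf\{\rho(z)+|z-x|\;:\;z\in M\}$ (positive and continuous because $M$ is closed): if $\ep<\tilde\rho/2$, then $|h(x)-x|<\ep(x)$ forces $\tilde\rho(x)\le\rho(h(x))+|h(x)-x|<\rho(h(x))+\tilde\rho(x)/2$, hence $\ep(x)<\rho(h(x))$ as required.
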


\begin{proof}
 Let $N\subset\R^n$  be a  definable $\Cc^2$ submanifold and $h:N\to M$ a diffeomorphism such that  $|h(x)-x|_1<\ep(x)$ for some $\ep\in\D^+(\R^n)$. We assume $\ep<1$ and we will put extra requirements on  $\ep$ on the way. For $\delta\in\D^+(M)$ define
 \begin{equation}\label{udelta}U_\delta:=\{x\in U\; : dist(x, M)<\delta(r(x))\},\end{equation}
 where $dist(x,M)$ is the euclidean distance of $x$ to $M$. If  $\delta$ is sufficiently small we have  $\adh{U_\delta}\subset U$.  Replacing $U$ with $U_\delta$ for some small $\delta$, we can assume  that $|d_xr|$ is bounded on bounded sets. Moreover, the assumption  $|h-id_{|_N}|_1<\ep$  entails $dist(x,M)<\ep(x)$ for every $x$ in $N$ and therefore $N\subset U_{\delta}$ if
 $\ep(x)\le\eta(x):=\inf\{\frac{\delta(z)}{2},\, |z|<|x|+1\}$.

Let $x\in N$ and take a unit vector $u\in T_xN$. Put $v:=d_xh(u)$. By assumption, we have $|u-v|<\ep(x)$.
Observe that 
\begin{enumerate}
\item $|d_xr(u)-d_xr(v)|\le |u-v||d_xr|$,
\item $|d_{h(x)}r(v)-u|=|v-u|$ (as $r$ is the identity on $M$),
\item by continuity of $x\mapsto d_xr$, there is a definable function $\mu(x)>0$ such that if $|x-h(x)|<\mu(x)$ then $|d_xr(v)-d_{h(x)}r(v)|<\frac{1}{8}$.
\end{enumerate}
By the above, if $\ep(x)<\min\{\eta(x),\mu(x),\frac{1}{8(|d_xr|+1)}\}$ we have
\begin{eqnarray*}|d_xr(u)-u|&\le& |d_xr(u)-d_xr(v)|+|d_{h(x)}r(v)-u|+|d_xr(v)-d_{h(x)}r(v)|\\
&<&\ep(x)(|d_xr|+1)+\frac{1}{8}<\frac{1}{4},\end{eqnarray*}
which shows that $d_x(r_{|N})$ is an isomorphism, implying that $r$ induces a local diffeomorphism on $N$.

To prove that it is a diffeomorphism on $N$, we start by showing that the restriction $r_{|_N}$ is proper. Observe that $N$ is closed. Indeed, take $(x_i)\subset N$ such that $x_i\to x\in \adh{N}$. As $h(x_i)$ is bounded and $M$ is closed there is  a  $y\in M$  such that $h(x_i)\to y$ (extracting some subsequence if necessary), which implies $x=h^{-1}(y)\in N$, yielding $N$ is closed. Now,   notice that if $|x_i|\to+\infty$ then, as $|r(x_i)-x_i|\le |h(x_i)-x_i|<\ep(x_i)$,  the sequence $r(x_i)$ must go  to  infinity as well. Since $N$ is closed, this shows that  the restriction of $r$ to $N$ is proper. 

Hence,
by Ehresmann's theorem $r_{|_N}$ is a locally trivial fibration above every connected component of $M$. 
We will show that it is indeed one-to-one. 
For this purpose, let us fix any $x$ in $M$. There is $q>0$ such that  $V_x:=B(x,q)\cap M$ is simply connected and hence $r_{|_N}$ is trivial above $V_x$.
We first show by way of contradiction that if $\ep$ is small enough (depending on $x$) then $r^{-1}(x)\cap N$ must be reduced to a single point.
Write 
 $r_{|_N}^{-1}(x)=\{x_1,\dots ,x_k\}$, suppose $k\ge 2$,   and notice that $$r_{|_N}^{-1}(V_x)=V_{1}\cup\dots \cup V_{k}$$ 
 is a disjoint union of neighborhoods $V_{i}$ of $x_i$ in $N$ respectively.
Take now $y\in B(x,q/8)\cap M$ and set $\ep_0:=\sup_K \ep$, where  $K$ is some compact neighborhood of $x$ comprising all the sets that we are considering. Let us observe that
$$|h^{-1}(y)-x|\le |h^{-1}(y)-y|+|y-x|<\ep_0+q/8,$$ which means that for $ \ep_0<q/16$, we have $h^{-1}(y)\in B(x,q/4)$, and consequently $$h^{-1}(B(x,q/8)\cap M)\subset B(x,q/4).$$ 
Moreover, since $$|r(h^{-1}(y))-x|\le |r(h^{-1}(y))-h^{-1}(y)|+|h^{-1}(y)- y|+|y-x|<q/16+q/16+q/8$$ we have that $r(h^{-1}(y))\in B(x,q/4)$, and hence 
$$h^{-1}(B(x,q/8)\cap M)\subset V_{1}\cup\dots \cup V_{k}.$$ Since the last union is disjoint, we can assume that $$h^{-1}(B(x,q/8)\cap M)\subset V_{1}.$$
Furthermore, we have 
$$|h(x_2)-x|<|h(x_2)-x_2|+|x_2-x|<2\ep_0<q/8,$$ 
and hence $x_2\in h^{-1}(B(x,q/8)\cap M)$, which  contradicts the fact that $V_1$ and $V_2$ are disjoint, establishing  that $r^{-1}(x)\cap N$ must be reduced to a single point.

Choose now one point in each  connected component of $M$, say $z_1,\dots,z_l$. By the above, for $\ep$ small enough, the set $r^{-1}(z_i)\cap N$ is reduced to a single point for all $i\le l$. Since $r_{|_N}:N\to M$ is a locally trivial fibration, it must be one-to-one.

We turn to show that $r_{|N}$ is onto.  As it is a locally trivial fibration on each connected component of $M$, it suffices to show that $r(N)$ contains at least one point in every connected component. Take any $x\in M$ and observe that, for $\ep$ small enough, $h^{-1}(x)\in U$ and $r(h(x))$ is a point close to $x$, which therefore must belong to the same connected component of $M$ as $x$, if $\ep$ is sufficiently small.  
\end{proof}
\begin{dfn} Let  $M$ be a $\Cc^k$ submanifold of $\R^n$, $k\ge 2$ (possibly infinite).  Fix any $\ep\in \D^+(M)$ and  a positive integer $p\le k$. A {\it definable deformation of $M$} is a definable family  $(Z_t)_{t\in[0,1]}$  of $\Cc^p$ submanifolds $Z_t\subset \R^n$ with  $M=Z_0$.  A deformation  $(Z_t)_{t\in[0,1]}$ is {\it $(\ep,\Cc^p)$   trivial} if
there exists a family of $\Cc^p$ diffeomorphisms $\varphi_t:M\to Z_t$, $t\in[0,1]$, $\Cc^p$ with respect to $t$ and satisfying $\varphi_0(x)=x$ for all $x$ as well as: 
     \begin{equation}\label{eq_istopie_approx}
      |x-\varphi_t(x)|< \ep(x) \et |u-d_x \varphi_t(u)|<\ep(x),
     \end{equation}
 for every $x\in M=Z_0$ and every unit vector $u\in T_xM$.  When $(\varphi_t)_{t\in [0,1]}$ is a definable family of mappings, we say that $(Z_t)_{t\in[0,1]}$  is {\it definably  $(\ep,\Cc^p)$ trivial}.
\end{dfn}
\begin{cor}
Let $M\subset\R^n$ be a closed, definable $\Cc^k$, $k\ge 2$ (possibly infinite), submanifold and let
$\ep\in \D^+(M)$. There exists $\delta\in\D^+(M)$ such that any $(\delta,\Cc^{k-1})$ trivial definable  deformation of $M$ is  $(\ep,\Cc^{k-1})$ definably trivial.
\end{cor}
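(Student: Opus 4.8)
The plan is to obtain the definable trivialization directly from the retraction $r$, applying Theorem~\ref{iso} to each member $Z_t$ of the deformation and using the given (a priori non-definable) family $(\varphi_t)$ only to certify the required estimates. Fix a definable tubular neighborhood $(U,r)$ of $M$ and let $\ep_0\in\D^+(\R^n)$ be the function furnished by Theorem~\ref{iso}; since $\ep_0$ depends only on $M$ and $(U,r)$, it serves for all the $Z_t$ at once. For $\delta$ small the inverse $\varphi_t^{-1}\colon Z_t\to M$ is a diffeomorphism with $|\varphi_t^{-1}-id|_1<\ep_0$ on $Z_t$: the bound $|\varphi_t^{-1}(y)-y|=|x-\varphi_t(x)|<\delta(x)$ for $y=\varphi_t(x)$, together with the corresponding first-order bound coming from \eqref{eq_istopie_approx}, gives this once $\delta$ is taken smaller than a suitable infimum of $\ep_0$ over a unit ball, exactly as the function $\eta$ is used in the proof of Theorem~\ref{iso} to pass from the base point $x$ to the nearby point $\varphi_t(x)$. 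Applying Theorem~\ref{iso} with $N=Z_t$ and $h=\varphi_t^{-1}$ then yields $Z_t\subset U$ and shows that $r_{|_{Z_t}}\colon Z_t\to M$ is a diffeomorphism; as $r$ and $Z_t$ are $\Cc^{k-1}$ and $d(r_{|_{Z_t}})$ is everywhere an isomorphism, it is in fact a $\Cc^{k-1}$ diffeomorphism.

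Next I would set $\psi_t:=(r_{|_{Z_t}})^{-1}\colon M\to Z_t$. This family is definable, being the fibrewise inverse of the definable family $(r_{|_{Z_t}})$; indeed its graph is the definable set $\{(t,x,y):y\in Z_t,\ r(y)=x\}$. Since $r$ restricts to the identity on $M$ we have $\psi_0=(r_{|_M})^{-1}=id_M$, so the normalization $\psi_0=id$ holds. The crucial identity for everything that follows is $\psi_t=\varphi_t\circ g_t^{-1}$, where $g_t:=r\circ\varphi_t\colon M\to M$; indeed $g_t=r_{|_{Z_t}}\circ\varphi_t$ is a composition of diffeomorphisms, hence a diffeomorphism of $M$, and $\psi_t\circ g_t=\varphi_t$ by injectivity of $r_{|_{Z_t}}$. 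Because $g_t$ is close to the identity in value and derivative --- in value since $|g_t(x)-x|\le dist(\varphi_t(x),M)+|\varphi_t(x)-x|<2\delta(x)$, and in derivative since $d_zr$ is the identity on $T_zM$ for $z\in M$ and varies continuously --- the inverse $g_t^{-1}$ is likewise close to the identity. Moreover $g_t$ is $\Cc^{k-1}$ in $(x,t)$ (being $r\circ\varphi_t$), so by the parametrized inverse function theorem $g_t^{-1}$ and hence $\psi_t=\varphi_t\circ g_t^{-1}$ are $\Cc^{k-1}$ in $t$.

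It remains to verify \eqref{eq_istopie_approx} for $\psi_t$ with the prescribed $\ep$. Writing $x':=g_t^{-1}(x)$, so that $\psi_t(x)=\varphi_t(x')$, one estimates $|x-\psi_t(x)|\le|x-x'|+|x'-\varphi_t(x')|$, the first term controlled by the closeness of $g_t^{-1}$ to the identity and the second by $\delta(x')$; the first-order estimate is handled in the same way through $d_x\psi_t=d_{x'}\varphi_t\circ d_xg_t^{-1}$. Each term tends to $0$ with $\delta$, uniformly in $t\in[0,1]$, so a final shrinking of $\delta$ --- again dominating it by an infimum of $\ep$ over a unit neighborhood, to absorb the base-point shift from $x$ to $x'$ --- makes both quantities smaller than $\ep(x)$ for every $x\in M$ and every $t$. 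I expect the main obstacle to be precisely this bookkeeping of the two base-point shifts, from $M$ to $Z_t$ when invoking Theorem~\ref{iso} and from $x$ to $g_t^{-1}(x)$ when estimating $\psi_t$: since the definable functions $\ep_0,\delta,\ep$ are only continuous, each shift must be absorbed by choosing $\delta$ dominated by suitable infima over unit balls, in the spirit of the function $\eta$ from the proof of Theorem~\ref{iso}.
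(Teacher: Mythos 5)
Your proposal is correct, and its skeleton coincides with the paper's: both arguments convert the hypothesis on $\varphi_t$ into the bound $|\varphi_t^{-1}-id|_1<\ep_0$ required by Theorem \ref{iso} (absorbing the base-point shift by infima of $\ep_0$ over balls, as with $\eta$ in that proof), apply Theorem \ref{iso} to each $Z_t$ with $h=\varphi_t^{-1}$, and take the definable trivialization to be $\psi_t=(r_{|_{Z_t}})^{-1}$. Where you genuinely diverge is in how the estimates (\ref{eq_istopie_approx}) for $\psi_t$ are verified. The paper proves them first for $r_t:=r_{|_{Z_t}}$ in the form (\ref{eq_istopie_approx2}): the zeroth-order bound via (\ref{e1})--(\ref{e3}), and the first-order bound by combining uniform continuity of $dr$ on $U_\delta$ with an explicit Grassmannian comparison of tangent planes, $\rho(T_xZ_t,T_{r_t(x)}M)<2\ep(r_t(x))$ (estimates (\ref{a1})--(\ref{a2})); it then inverts, at the cost of a factor $2$. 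You instead factor $\psi_t=\varphi_t\circ g_t^{-1}$, where $g_t:=r\circ\varphi_t$ is a self-diffeomorphism of $M$ close to the identity, and obtain both bounds from the chain rule; the tangent-plane comparison is absorbed into the identity $d_xg_t=d_{\varphi_t(x)}r\circ d_x\varphi_t$, so the Grassmannian never appears. This buys you something the paper leaves implicit: definable $(\ep,\Cc^{k-1})$ triviality requires the family $\psi_t$ to be $\Cc^{k-1}$ with respect to $t$ and to satisfy $\psi_0=id_M$, and your factorization delivers both at once, via the parametrized inverse function theorem applied to $(x,t)\mapsto(g_t(x),t)$, whereas the paper's proof does not address $t$-regularity at all. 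What the paper's direct route buys in exchange is that every estimate stays anchored at the single point $r_t(x)$, so no constants compound through two compositions and two inversions. One point you should make explicit: your phrase ``$d_zr$ varies continuously'' must be upgraded, exactly as in the paper, to uniform continuity of $dr$ on bounded subsets of $U_\delta$ after shrinking $\delta$ so that $\adh{U_\delta}\subset U$ (which also makes $|dr|$ bounded on bounded sets, needed when you control $|d_{\varphi_t(x)}r(v)-d_{\varphi_t(x)}r(u)|$ by $|d_{\varphi_t(x)}r|\,|v-u|$ in the estimate for $d_xg_t$); pointwise continuity alone does not produce the definable modulus that your final shrinking of $\delta$ requires.
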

\begin{proof}Let 
$\ep\in \D^+(M)$ and take a tubular neighborhood $(U,r)$ of $M$. Take some $\delta$ sufficiently small for  $U$ to contain the closure of $U_\delta$ (see (\ref{udelta})).  The derivative of $r$  is then uniformly continuous on every bounded subset of $U_\delta$. Let   $(Z_t)_{t\in[0,1]}$ be a $(\delta,\Cc^k)$ trivial deformation, with corresponding family of diffeomorphisms $\varphi_t:M\to Z_t$. If $\delta$ is sufficiently small, by Theorem \ref{iso} (applied with $h:=\varphi_t^{-1}$ for each $t$), the restriction $r_t$ of $r$ to each $Z_t$ induces a definable diffeomorphism.
 We  are going to verify that  for $\delta$ sufficiently small we have for all  $x\in Z_t$ and $u \in T_xZ_t$ unit vector (for any $t$):
   \begin{equation}\label{eq_istopie_approx2}
      |x-r_t(x)|< \ep(r_t(x)) \et |u-d_x r_t(u)|<\ep(r_t(x)).
     \end{equation}
 Before proving these two inequalities, let us make it clear that this yields the desired fact. We may assume $\ep <\frac{1}{2}$.  Setting $y=r_t(x)\in M$ and $v=\frac{d_xr_t(u)}{|d_xr_t(u)|}\in T_yM$ (if $x\in Z_t$ and $u \in T_xZ_t$ is a unit vector), (\ref{eq_istopie_approx2}) immediately entails  $|d_xr_t(u)| \ge \frac{1}{2}$   so that
  $$|(r_{t})^{-1}(y)-y|< \ep(y) \et |d_y (r_{t})^{-1}(v)-v|<2\ep(y),$$
showing (\ref{eq_istopie_approx}) for $r_t^{-1}$ (up to the constant $2$).

   We can assume that $\delta<1$.
Observe that for such $x$
\begin{equation}\label{e1}|r_t(x)-x|= dist(x,M)\le |x-\varphi_t^{-1}(x)|\overset{(\ref{eq_istopie_approx})}\le  \delta(\varphi_t^{-1}(x))\end{equation}
and hence 
\begin{equation}\label{e2}|r_t(x)-\varphi_t^{-1}(x)|\le 2 \delta(\varphi_t^{-1}(x))\le 2.\end{equation}
For  $\delta'(y):=\sup\{3\delta(z)\;:\ z\in M\cap  B(y,2)\}, y\in M$, we obtain from (\ref{e1}) and (\ref{e2}) that  
\begin{equation}\label{e3}|r_t(x)-x|\le  \delta'(r_t(x)).\end{equation}

Now, as  $x\mapsto d_x r$ is uniformly continuous on bounded sets, if $\delta$ is small enough we have on $U_\delta$  for each unit vector $u\in T_{r_t(x)}M$:
\begin{equation}\label{d1}
 |d_x r(u)-u|=|d_x r(u)-d_{r( x)} r(u)| \le \ep(r_t(x)).
\end{equation}
 This is almost the desired estimate (together with (\ref{e3})). The problem is that we need to have such an estimate for $u \in T_xZ_t$, $x\in Z_t\subset U_{\delta}$. We thus are going to estimate the distance between $T_xZ_t$ and $T_{r_t(x)} M$ (see (\ref{a1}) and (\ref{a2}) below).

Denote by $\G^m_n$, $m=\dim M$, the Grassmannian of $m$-dimensional linear subspaces of  $\R^n$, that we endow with the metric $$\rho(P,Q):=\sup_{a\in P,|a|=1}\inf_{b\in Q} |a-b|.$$
Observe that 
\begin{equation}\label{a1}
\rho(T_xZ_t,T_{\varphi^{-1}_t(x)}M)\overset{(\ref{eq_istopie_approx})}<\delta(\varphi_t^{-1}(x))<\delta'(r_t(x)),
\end{equation}
by definition of $\delta'$ and (\ref{e2}). Moreover, since the tangent bundle of $M$ is at least $\Cc^1$, by (\ref{e2}),  for $\delta$ sufficiently small, we have \begin{equation}\label{a2}
\rho(T_{r_t(x)}M,T_{\varphi^{-1}_t(x)}M)<\ep(r_t(x)).
\end{equation}
By (\ref{a1}) and (\ref{a2}), we get for $\delta$ small enough
$$\rho(T_xZ_t,T_{r_t(x)}M)<2\ep(r_t(x)),$$ 
 which with (\ref{e1}) and (\ref{d1}) yields (\ref{eq_istopie_approx2}).
\end{proof}

\subsection*{Acknowledgements}
Research supported by the National Science Center (Poland) under  grant number  2021/43/B/ST1/02359.


\normalsize

\end{document}